\numberwithin{equation}{section}
\theoremstyle{remark}\newtheorem{note}{Note}[section]
\theoremstyle{plain}\newtheorem{teo}{Theorem}[section]
\theoremstyle{definition}\newtheorem*{defi}{Definition}
\theoremstyle{plain}
\theoremstyle{plain}\newtheorem*{propo1}{Proposition i}
\theoremstyle{plain}\newtheorem*{propo2}{Proposition ii}
\theoremstyle{plain}\newtheorem*{propo3}{Proposition iii}
\theoremstyle{plain}\newtheorem*{propo4}{Proposition iv}
\theoremstyle{plain}\newtheorem{lem}{Lemma}[section]
\theoremstyle{plain}
\theoremstyle{plain}
\theoremstyle{remark}\newtheorem{obs}{Remark}[section]
\newcommand{\bbR}{{\mathbb R}}
\newcommand{\RR}{{\mathbb R}}
\newcommand{\calA}{{\mathcal A}}
\title{Fully discrete schemes for monotone optimal control problems}
\author{Eduardo A. Philipp $^\dag$ \and Laura S. Aragone $^\dag$ \and Lisandro A. Parente $^\dag$}
\date{\em{\small{$\dag$: CIFASIS - CONICET - UNR - Argentina \\ e-mails: philipp@cifasis-conicet.gov.ar; aragone@cifasis-conicet.gov.ar; parente@cifasis-conicet.gov.ar}}}
\begin{document}
\maketitle
\begin{abstract}
In this article we study a finite horizon optimal control problem with monotone controls. We consider the associated Hamilton-Jacobi-Bellman (HJB) equation which characterizes the value function.

We consider the totally discretized problem by using the finite
element method to approximate the state space $\Omega$. The obtained problem is
equivalent to the resolution of a finite sequence of stopping-time problems.

The convergence orders of these approximations are proved, which are
in general $(h+\frac{k}{\sqrt{h}})^\gamma$ where $\gamma$ is the H\"older constant of the value function $u$. A special election of the
relations between the parameters $h$ and $k$ allows to obtain a
convergence of order $k^{\frac{2}{3}\gamma}$, which is valid without
semiconcavity hypotheses over the problem's data.

We show also some numerical implementations in an example.
\end{abstract}

\section{Introduction}

We study a fully discrete scheme for the numerical resolution of the
infinite horizon monotone optimal control problem, through the
analysis of the associated finite horizon problem as in \cite{APP}.

The considered system is governed by the following differential
equation
\begin{equation}
\left\{ \begin{array}{ll} \dot{y}(s)=g(y(s),\alpha(s)), & s>0, \\
y(0)=x, & \end{array} \right. \label{dinam}
\end{equation}
where $\alpha(\cdot)$ is the control function, which throughout this
article is restrained to be a non-decreasing monotone function
defined in the set $[0,+\infty)$, with values in $[0,1]$. For each
$a \in [0,1]$ let $\calA(a)$ be the set of these functions with
initial values higher or equal to $a$, i.e. $\alpha(0) \ge a$. At a
point $s>0$, the vector $y(s) \in \bbR^\nu$ is the state
corresponding to the control $\alpha(\cdot)$ employed; $x \in
\Omega$ is the initial state, where $\Omega \subset \bbR^\nu$ is an open
set. We assume that the evolution of the system always stays in
$\Omega$, no matter which control is selected.

The performance of the employed control is measured through the
functional $J$
$$ J(x,\alpha(\cdot)):= \int\limits_0^\infty f(y(s),\alpha (s))e^{-\lambda s} \text{d} s, $$
where $f: \Omega \times [0,1] \rightarrow \bbR$ is the instantaneous
cost and $\lambda > 0$ is the discount factor. The problem consists
in finding for each pair $(x,a) \in \Omega \times [0,1]$, a control
$\bar{\alpha} \in \calA(a)$ that attains the minimum of the
functional $J$. Therefore, the value function $u$ is
\begin{equation}
u(x,a):= \inf_{\alpha \in \calA(a)} J(x,\alpha(\cdot)), \label{defu}
\end{equation}
which allows to construct optimal or suboptimal policies in feedback
\cite{13,14}. \\
We assume the following Lipschitz continuity and boundedness hypotheses on
the functions $g$ and $f$: there exist positive constants $L_g, M_g,
L_f$ and $M_f$ such that $\forall\, x,\bar{x} \in \Omega,\;
\forall\, a,\bar a \in [0,1],$

\begin{equation}
 \| g(x,a)-g(\bar{x},\bar a) \|  \le  L_g \left( \|x-\bar{x}\| + |a-\bar a| \right), \hspace{12mm}
\| g(x,a) \| \le M_g,   \label{condg}
\end{equation}

\begin{equation}
| f(x,a)-f(\bar{x},\bar a) |  \le  L_f \left(\|x-\bar{x}\| + |a-\bar
a| \right), \hspace{13mm} | f(x,a) | \le  M_f. \label{condf}
\end{equation}
Under these assumptions, using classic arguments  it can be proven
that the function $u$ is bounded and H\"older continuous in both
variables with H\"older constant $\gamma$:
\begin{equation}
\left\{ \begin{array}{rcl} \gamma = 1 & \text{ if} & \lambda > L_g  \vspace{2mm}\\
\gamma = \frac{\lambda}{L_g}  & \text{ if} & \lambda < L_g  \vspace{2mm}\\
\gamma \in (0,1) & \text{ if} & \lambda = L_g  \end{array} \right.
\end{equation}
We consider the quasi-variational HJB inequality associated to the
problem, given by the equation
\begin{equation}
\min \left( Lu(x,a),\frac{\partial u(x,a)}{\partial a} \right) = 0,
\qquad \text{in } \Omega \times (0,1), \label{HJB}
\end{equation}
where
\[ Lu(x,a) = \frac{\partial u(x,a)}{\partial x} g(x,a) + f(x,a) - \lambda u(x,a), \]
and the boundary condition
\begin{equation}
u(x,1) = \int\limits_0^\infty f(\eta(s),1)e^{-\lambda s} \text{d}s,
\label{bondcond}
\end{equation}
where $\eta(s)$ is the trajectory corresponding to the control $\alpha \equiv 1$, and  $x$ is the initial value.\\
In general, this equation does not admit a solution in $C^1(\Omega
\times (0,1))$, so the notion of viscosity solutions comes into play
(see \cite{37}). Specifically,  $u$ is the unique viscosity
solution of the  HJB equation \eqref{HJB} with boundary conditions
\eqref{bondcond} (see \cite{13}).
\subsection{Discretization in time and control space}
We would like to obtain discretization schemes in order to
numerically solve the equation \eqref{HJB}. For this aim, we
introduce an auxiliary optimal control problem where the policies
have the additional restriction of being uniformly step functions
with values in a discrete, equi-spaced set. Specifically, the
control variable $a$ takes values in the set
\[
I_h := \left\{ ih | i=0 \dots \frac{1}{h} \right\}.
\]
We also define $I_h(a) = I_h \cap [a,1]$. In the space $C(\Omega
\times I_h)$ we consider the operators
\begin{eqnarray}\nonumber
\left( A^{h,b}(w) \right)(x,a) &=& (1-\lambda h) w(x+h g(x,a),b) + h f(x,a),\\
\left(A^h(w) \right) (x,a) &=& \min_{b \in I_h(a)} \left( A^{h,b}(w)
\right) (x,a), \label{esq1}
\end{eqnarray}
arising from the discretization of the HJB equation.

Having considered a discretization in time, we introduce the
consistent problem of finding in the functional space $C(\Omega
\times I_h)$
\begin{equation}
\text{Problem } P^h: \text{ Find the fixed point } u^h \text{ of the
operator } A^h \label{ptofijo}
\end{equation}
From \eqref{esq1} it turns out that $A^h$ is a contractive operator
if $0<h<\frac{1}{\lambda}$. From this property it is straightforward
to prove that \eqref{ptofijo} has an unique solution $u^h$, which is
bounded and uniformly H\"{o}lder continuous in the first variable (see
\cite{27}). If \eqref{condg} and \eqref{condf} are satisfied, the
same techniques give the H\"older continuity in both variables.


\subsection{Fully discrete infinite horizon problem}
The previously presented discretization allowed us to obtain important convergence results (see \cite{APP}). Nevertheless, in order to obtain numerical methods to estimate $u^h$ it is also necessary the discretization in the
state variables. We obtain a solution $u^h_k$ totally discrete, considering a
discretization in these variables by the finite element method.
\subsubsection*{Discretization elements}
Let $\{S_j^k\}$ be a family of triangulations of $\Omega$, i.e. a set of simplices that approximate $\Omega$ in the following sense: \vspace{5mm} \\
$\Omega_k = \bigcup_j S_j^k$ is a polyhedron of $\mathbb{R}^\nu$
such that the following properties hold: \begin{equation} \max_j
(\text{diam } S_j^k) = k. \label{HIP1} \end{equation}
\begin{equation} \exists h_0
> 0 \text{ such that } x+hg(x,a) \in \Omega_k,\; \forall x \in \Omega_k,\;
\forall a \in I_h(0),\; \forall h<h_0. \label{HIP2} \end{equation}
\begin{equation} \begin{array}{l} \Omega_k \rightarrow \Omega \text{ when } k \rightarrow 0 \text{, in the
following sense: } \Omega_k \subset \Omega \text{ and} \\
\forall K \text{ compact contained in } \Omega,\; \exists \bar{k}(K)
/ K \subset \Omega_k \quad \forall k \le \bar{k}(K).
\end{array} \label{HIP3}
\end{equation}
If $d_i$ is the diameter of the simplex $S_i^k$, then $\exists
\chi_1 > 0$ such that for every simplex of $\Omega_k$, there exists
a sphere of radius
    \begin{equation}
    r \le \chi_1 d_i. \label{HIP4}
    \end{equation}
    in the interior of the simplex. Furthermore there exists $M$, independent of the discretization,
such that
\begin{equation}
\frac{k}{d_i} \le M \qquad \forall i. \label{HIP5}
\end{equation}
We consider the set $W_k$ of functions $w: \Omega_k \times I_h
\rightarrow \RR,\; w(\cdot,a)$ continuous in $\Omega_k$, with
$\frac{\partial w(\cdot,a)}{\partial x}$ constant in the interior of
each simplex of $\Omega_k$ (that is, $w$ is a linear finite
element).

Let $V^k$ be the union over $j$ of the vertices of the family of simplices $\{S_j^k\}$. If $N$ is the cardinality of $V^k$ we consider an appropiate ordering: $V^k=\{ x^i,\; i=1,\ldots,N\}$.
\begin{obs} It is obvious that any $w \in W_k$ is completely determined by the values $w(x^i,a),\, i=1,\ldots,N,\, a \in I_h$.
\end{obs}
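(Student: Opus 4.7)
The plan is to exploit the defining structure of $W_k$: each $w \in W_k$ is, for every fixed $a \in I_h$, a continuous piecewise-affine function on the polyhedron $\Omega_k$ whose gradient is constant on the interior of each simplex $S_j^k$. Since $I_h$ is a finite set, it suffices to prove the statement for each fixed $a \in I_h$ separately, and then assemble the (finitely many) slices.

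For a fixed $a$, I would argue simplex by simplex. Pick any simplex $S_j^k$ with vertices $v_0,\ldots,v_\nu$; every point $x \in S_j^k$ admits a unique set of barycentric coordinates $\lambda_0,\ldots,\lambda_\nu$ with $\lambda_i \ge 0$ and $\sum_i \lambda_i = 1$ and $x = \sum_i \lambda_i v_i$. Because $w(\cdot,a)$ is affine on $S_j^k$, the identity
\[
w(x,a) \;=\; \sum_{i=0}^\nu \lambda_i\, w(v_i,a)
\]
holds on the entire simplex. Since each $v_i$ belongs to $V^k = \{x^1,\ldots,x^N\}$, the right-hand side is a function only of the finitely many nodal values $w(x^i,a)$. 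Doing this for every $j$ recovers $w(\cdot,a)$ on $\bigcup_j S_j^k = \Omega_k$, and on the shared face of two adjacent simplices the two local formulas agree (both reduce to the affine interpolation on the common vertices), consistent with the stated continuity.

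Repeating this construction for each of the finitely many $a \in I_h$ shows that $w$ on $\Omega_k \times I_h$ is entirely reconstructed from the finite table of nodal values $\{w(x^i,a) : i=1,\ldots,N,\ a \in I_h\}$.

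There is no real obstacle here; the only point one must not overlook is the continuous matching across interior simplex faces, which is automatic once one observes that the barycentric-coordinate representation above depends only on values at vertices lying on that common face. This is exactly why the author marks the statement as a remark rather than a lemma.
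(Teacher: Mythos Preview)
Your argument is correct and is exactly the standard justification: fix $a\in I_h$, use barycentric coordinates on each simplex to write $w(\cdot,a)$ as the affine interpolant of its vertex values, and note that continuity across shared faces is automatic since the interpolant on a face depends only on the vertices of that face. The paper itself gives no proof at all---the statement is recorded as a remark with the word ``obvious''---so there is nothing to compare against; you have simply spelled out the routine finite-element fact that the authors take for granted.
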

\begin{obs} In order to accomplish simplicity in the notation and clarity in the arguments, we will use the letters $C, M, K$ to denote arbitrary constants (whose values depend on the context where they appear and on the data of the problem, the constants $\lambda, M_g, M_f, L_g, L_f$, etc.) but not on the parameters of discretization $h, k$ or of regularization $\rho$, etc.
\end{obs}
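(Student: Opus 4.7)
The final statement is Remark 1.2, which is not a mathematical assertion but a typographic/notational convention: the authors declare that throughout the paper the symbols $C$, $M$, $K$ will stand for generic constants depending on problem data ($\lambda, M_g, M_f, L_g, L_f$, etc.) but independent of the discretization parameters $h,k$ and the regularization parameter $\rho$. There is no proposition, implication, or equality to verify, so no proof exists to be sketched.

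The plan, therefore, is essentially vacuous. If I were forced to produce a ``proof'', the only honest content would be to observe that the statement is a meta-linguistic definition introducing an abuse of notation — analogous to the familiar convention in analysis where ``$C$'' in an inequality $a_n \le C b_n$ may silently change from line to line. The correctness of such a convention is a matter of internal consistency of the subsequent exposition (each occurrence of $C$, $M$, $K$ must indeed be instantiable by a constant of the declared type), rather than a theorem derivable from the hypotheses \eqref{condg}--\eqref{condf} or the discretization assumptions \eqref{HIP1}--\eqref{HIP5}.

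Consequently, my ``proof proposal'' is to note that no argument is required and to flag that the real mathematical work only begins with the subsequent definitions of the fully discrete operator on $W_k$ and the associated fixed-point problem, where this convention will then be exploited to suppress uninteresting constants in the error estimates of order $(h+k/\sqrt{h})^{\gamma}$ announced in the abstract. The main ``obstacle'', if any, is purely editorial: ensuring that every later appearance of $C$, $M$, or $K$ is in fact bounded independently of $h,k,\rho$, a bookkeeping task rather than a mathematical one.
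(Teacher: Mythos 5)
You are right: this is Remark 1.2, a notational convention about generic constants, and the paper itself offers no proof because there is nothing to prove. Your identification of the statement as a meta-linguistic declaration requiring only consistency in later usage matches the paper's treatment exactly.
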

We define the operator $A_k^h$ the following way
\[ A_k^h: W_k \rightarrow W_k \]
\begin{equation}
(A_k^h w)(x^i,a) = \min_{b \in I_h(a)} \left\{ (1-\lambda h) w(x^i + hg(x^i,a),b) + hf(x^i,a) \right\},\quad \forall x^i \in V^k,\, \forall a \in I_h. \label{56}
\end{equation}
\begin{obs}
The operator $A_k^h$ is merely the restriction of $A^h$ to the space $W_k$.
\end{obs}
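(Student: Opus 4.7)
The plan is to verify the remark directly from the two definitions, using the earlier observation that elements of $W_k$ are characterized by their values at the vertices $V^k$. Concretely, I view any $w\in W_k$ as a continuous function on $\Omega_k\times I_h$ (by virtue of piecewise linear interpolation), and I check that $A^h w$ is well-defined at each vertex and that the value it produces there is precisely the value prescribed by the definition \eqref{56} of $A_k^h w$.

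First I would note that the formula defining $A^h$ makes sense for any continuous $w:\Omega_k\times I_h \to \bbR$, provided the argument $x+hg(x,a)$ stays inside $\Omega_k$ for every $(x,a)$ at which we evaluate. Hypothesis \eqref{HIP2} guarantees exactly this when $h<h_0$, so $A^h w$ is well-defined as a continuous function on $\Omega_k\times I_h$ for any $w\in W_k$. Second, evaluating this formula at a vertex $x^i\in V^k$ and comparing with \eqref{56} yields termwise the same expression:
\[
(A^h w)(x^i,a)=\min_{b\in I_h(a)}\{(1-\lambda h)\,w(x^i+hg(x^i,a),b)+hf(x^i,a)\}=(A_k^h w)(x^i,a).
\]
Thus $A_k^h w$ and $A^h w$ agree on $V^k\times I_h$.

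Finally, I would invoke the first remark in the section: any function in $W_k$ is completely determined by its nodal values at the vertices $x^i\in V^k$. Since $A_k^h w$ is defined to be the element of $W_k$ whose nodal values coincide with those of $A^h w$, the operator $A_k^h$ is the composition of $A^h$ with the $W_k$-interpolation operator at $V^k$; equivalently, it is the restriction of $A^h$ to $W_k$ in the sense that on the defining degrees of freedom (the vertex values) the two produce identical data.

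There is no real obstacle here: the argument is essentially a reading of the two definitions, so I would keep it short and focus the exposition on (i) clarifying the sense in which the word \emph{restriction} is used, and (ii) recording the role of \eqref{HIP2}, which is what allows the formula for $A^h$ to be applied to $w\in W_k$ without leaving the triangulated domain.
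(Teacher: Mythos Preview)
Your verification is correct and is exactly the unpacking of definitions one would give; the paper itself offers no proof for this remark, treating it as evident from the definitions of $A^h$ and $A_k^h$. Your added clarifications --- that ``restriction'' here means $A^h$ followed by nodal interpolation into $W_k$, and that \eqref{HIP2} is what makes $A^h w$ well-defined on $\Omega_k$ for $w\in W_k$ --- are accurate and worth recording even though the paper leaves them implicit.
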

The final problem to be resolved numerically is the following
\begin{equation}
\textsl{Problem } P_k: \text{ Find the fixed point of the operator }
A_k^h \label{57}
\end{equation}
It is clear that $A_k^h$ is a contractive operator in $W^k$ with the norm
\[ ||w|| = \max_{i,j} |w(x^i,a^j)| \]
that is $\forall w, \bar{w} \in W_k$
\begin{equation}
||A_k^h(w) - A_k^h(\bar{w})|| \le (1-\lambda h) ||w-\bar{w}|| \label{58}
\end{equation}
and therefore it has an unique fixed point, say $u_k^h$. Then
\[
u_k^h = A_k^h(u_k^h).
\]
\begin{obs} The problem $P_k$ is a nonlinear fixed point problem with a very special structure. It is equivalent to a stochastic control problem over a Markov chain (the reason for this relationship is the existence of a {\em discrete maximum principle} (see \cite{33,34,45}), valid for the schemes of discretization used in the definitions of $A_k^h$). In particular, it can be easilly seen that it represents a finite sequence of concatenated optimal stopping times. Therefore it can be solved by applying specially designed methods for those problems (Picard or Howard, see \cite{45}, \cite{77}).\\
\end{obs}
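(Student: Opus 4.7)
The plan is to display the fixed point equation \eqref{56} as a Bellman equation for a finite Markov decision process, then exploit the monotonicity constraint $b \in I_h(a)$ to split the equation into $1/h$ concatenated optimal stopping problems.

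First I would exploit the piecewise-affine structure of $W_k$. For every $y \in \Omega_k$, letting $S^k(y)$ be a simplex containing $y$ with vertices $\{x^{i_0},\ldots,x^{i_\nu}\} \subset V^k$, the barycentric coordinates of $y$ produce weights $\lambda_{i_j}(y) \ge 0$ with $\sum_j \lambda_{i_j}(y) = 1$, and every $w \in W_k$ satisfies $w(y,b) = \sum_j \lambda_{i_j}(y)\, w(x^{i_j},b)$. This is the content of the \emph{discrete maximum principle} invoked in the remark: the interpolation weights form a bona fide probability distribution on $V^k$. Applying this to the point $y = x^i + h g(x^i,a)$ (which lies in $\Omega_k$ by \eqref{HIP2}), one obtains transition probabilities $p_{ij}^a \ge 0$ with $\sum_j p_{ij}^a = 1$, so that \eqref{56} rewrites as
\begin{equation*}
u_k^h(x^i,a) \;=\; \min_{b \in I_h(a)} \Bigl\{\, h f(x^i,a) + (1-\lambda h) \sum_{j=1}^N p_{ij}^a\, u_k^h(x^j,b) \,\Bigr\}.
\end{equation*}
This is exactly the dynamic programming equation of a controlled Markov chain on the finite state space $V^k \times I_h$ with discount factor $(1-\lambda h)$, running cost $hf(x^i,a)$, and transition law that, from state $(x^i,a)$ under action $b \ge a$, jumps to $(x^j,b)$ with probability $p_{ij}^a$.

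Next I would exploit the constraint $b \ge a$: the $a$-component is monotone non-decreasing along every admissible trajectory, so the $1/h + 1$ levels of $I_h$ can be solved by backward induction. At the top level $a = 1$, $I_h(1) = \{1\}$ collapses the minimum, and $u_k^h(\cdot,1)$ solves the linear system $v = hf(\cdot,1) + (1-\lambda h)Pv$, a standard valuation of a discounted Markov chain. Assume now $u_k^h(\cdot,b)$ is known for every $b \in I_h$ with $b > a$, and set
\begin{equation*}
\psi_a(x^i) \;:=\; \min_{b \in I_h(a),\, b > a}\Bigl\{\, hf(x^i,a) + (1-\lambda h)\sum_{j} p_{ij}^a\, u_k^h(x^j,b) \,\Bigr\}.
\end{equation*}
Splitting the minimum over $I_h(a)$ into $b = a$ versus $b > a$, the equation for $u_k^h(\cdot,a)$ reduces to
\begin{equation*}
u_k^h(x^i,a) \;=\; \min\Bigl\{\, hf(x^i,a) + (1-\lambda h)\sum_{j} p_{ij}^a\, u_k^h(x^j,a),\; \psi_a(x^i)\,\Bigr\},
\end{equation*}
which is the Bellman equation of a discrete optimal stopping problem on $V^k$: at each step one either pays $hf(x^i,a)$ and continues at level $a$, or \emph{stops} by jumping to the obstacle $\psi_a$ (i.e.\ by raising the control to some $b > a$). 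Iterating for $a = 1-h,\, 1-2h,\, \ldots,\, 0$ exhibits $P_k$ as a finite concatenation of $1/h$ such stopping problems.

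Finally, each stopping subproblem is of the standard type for which both Picard (value) iteration and Howard (policy) iteration are effective: contractivity of the underlying linear operator $v \mapsto (1-\lambda h)Pv$ gives geometric convergence of Picard, while the binary ``continue vs.\ stop'' structure of the admissible policies gives finite termination of Howard (cf.\ \cite{45,77}). The main obstacle in making all this fully rigorous is verifying the discrete maximum principle in the present form, i.e.\ that the barycentric weights on each simplex really yield non-negative transition probabilities -- this is guaranteed by the piecewise-linear finite-element setting together with the shape regularity \eqref{HIP4}--\eqref{HIP5}; the remaining decomposition into stopping problems is then a purely algebraic consequence of $I_h(a) = I_h \cap [a,1]$.
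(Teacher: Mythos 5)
The paper offers no proof of this remark at all: it is stated as an observation, with the probabilistic interpretation delegated to the references \cite{33,34,45} and the solvability by Picard/Howard to \cite{45,77}. Your proposal supplies the missing argument, and it is the correct, standard one: writing $x^i+hg(x^i,a)$ in barycentric coordinates of the simplex containing it turns the interpolation weights into transition probabilities $p_{ij}^a$, so \eqref{56} becomes the Bellman equation of a discounted controlled Markov chain on $V^k\times I_h$; the monotonicity constraint $I_h(a)=I_h\cap[a,1]$ then makes the control level non-decreasing along trajectories, so the system can be solved level by level from $a=1$ downward, each level being an optimal stopping problem with obstacle $\psi_a$ built from the already-computed higher levels. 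This is exactly the structure the remark asserts, and your final observations (geometric convergence of Picard by the contraction \eqref{58}, finite termination of Howard by finiteness of the policy set) are also correct.

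One small correction: the non-negativity of the barycentric weights is not where shape regularity \eqref{HIP4}--\eqref{HIP5} enters. Barycentric coordinates of a point lying in a simplex are automatically non-negative and sum to one for \emph{any} simplex; this is precisely why the discrete maximum principle holds for $P^1$ elements (and why it can fail for higher-order elements, which is the point of \cite{34}). What is actually needed for your construction is \eqref{HIP2}, guaranteeing that $x^i+hg(x^i,a)$ lies in $\Omega_k$ and hence in some simplex; the shape-regularity hypotheses are used elsewhere in the paper, for the interpolation error estimate \eqref{83}. With that adjustment your justification is complete.
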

The main result of this paper is the
$\left(h+\frac{k}{\sqrt{h}}\right)^\gamma-$ convergence of the fully
discretized infinite horizon problem, where $\gamma$ is the H\"older
constant of the value function $u$, and is stated in Theorem
\ref{main}.

The paper is organized as follows. In section 2 we introduce the
associated finite horizon problem, its discretization in time and
the fully discretized problem. In section 3 we state the main result
of this paper, namely the convergence of the fully discrete scheme
in the infinite horizon case and state and prove some necessary
results. In section 4 we prove the main result and make some remarks
about the choice of the discretization parameters. In
section 5 we show some numerical implementations. Finally, in section 6 we state the conclussions.

\section{The associated finite horizon problem}

With the purpose of obtaining some technical results on convergence,
we consider a similar problem whose main difference is that it deals
with finite horizon. The problem consists in finding the value
function $u_T$, defined in the following way:
 $\forall t \in [0,T],\, \forall x \in \Omega,\,
\forall a \in [0,1]$,
\begin{equation}
u_T(t,x,a):= \inf_{a \in \calA_T(a)} J_T(t,x,\alpha(\cdot)),
\label{defu_T}
\end{equation}
where $\calA_T(a)$ is the set of non-decreasing functions defined in
$[t,T)$ with values in $[a,1]$, and the functional $J_T$ is defined
by
\begin{equation}
J_T(t,x,\alpha(\cdot)):= \int\limits_t^T
f(y(s),\alpha(s))e^{-\lambda (s-t)} \text{d}s \label{J_T}
\end{equation}
with $y(\cdot)$ being the solution of \eqref{dinam} with initial
condition $y(t)=x$. \\
If conditions \eqref{condg} and \eqref{condf} hold, then
\[
| u_T(t,x,a) |  \le  \frac{M_f}{\lambda} (1-e^{\lambda t})
\]and
\[
| u_T(t,x,a) - u_T(t,\bar{x},\bar a) | \le L_T \left(\|x-\bar{x}\| +
|a-\bar a| \right)
\]
where $L_T=\frac{L_f}{\lambda - L_g}$ if $\lambda > L_g$,
$L_T=\frac{L_f}{L_g - \lambda}e^{(L_g-\lambda)T}$ if $\lambda <
L_g$, and $L_T=TL_f$  if $\lambda = L_g$. Also, there exists a
positive constant $M_T$ such that
\[
| u_T(t,x,a) - u_T(\bar{t},x,a) | \le M_T|t-\bar{t}|.
\]
The proof of these properties makes use of classic techniques. \\
For the finite horizon case, the associated HJB equation takes the
form
\begin{equation}
\min \left( Lu_T(t,x,a),\frac{\partial u_T(t,x,a)}{\partial a}
\right) = 0 \qquad \text{in } (0,T) \times \Omega \times (0,1),
\label{HJB2}
\end{equation}
where $$ Lu_T = \frac{\partial u_T}{\partial t} + \frac{\partial
u_T}{\partial x} g + f - \lambda u_T,$$ with final condition
\[
u_T(T,x,a) = 0, \qquad \forall (x,a) \in \Omega \times [0,1],
\]
and boundary condition
\begin{equation}
u_T(t,x,1) = \int\limits_t^T f(\eta(s),1)e^{-\lambda (s-t)}
\text{d}s, \label{condfr}
\end{equation}
where $\eta(s)$ is the trajectory with initial value $\eta(t)=x$
corresponding to the control $\alpha \equiv 1$.

The viscosity solution of the equation \eqref{HJB2} is defined
similarly to the infinite horizon case  (see \cite{37}), proving that the value function $u_T$ is the unique
viscosity solution. \\
A different approach to the finite horizon problem with monotone
controls has been made in \cite{Hellwig:08} establishing a
Pontryagin Maximum Principle.

In a similar way to the infinite horizon problem, we consider for
the finite horizon case a discretization in time.  Let $h>0$ and
$I_h$, $I_h(a)$ as in the infinite horizon case.
\begin{obs} In what follows, we consider that $h^{-1}$ is an integer and that the horizon $T=\mu h $, with $\mu$ an integer.
\end{obs}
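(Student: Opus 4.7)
The final item in the excerpt is the observation that, from now on, $h^{-1}$ is assumed to be a positive integer and the horizon satisfies $T = \mu h$ with $\mu \in \bbN$. This is not a mathematical assertion to be deduced from the previously stated hypotheses; it is a standing normalization of the discretization parameters that will be in force throughout the rest of the paper. Consequently there is no proof to write in the usual sense, and a proposal can only consist in explaining why the convention is admissible and why nothing is lost by imposing it.

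The plan is therefore to justify the two conditions separately. The condition $1/h \in \bbN$ is already latent in the definition of the control grid $I_h = \{i h : i = 0,\ldots,1/h\}$ used in \eqref{esq1} and \eqref{56}: for $I_h$ to be a subdivision of $[0,1]$ with both endpoints as nodes, $1/h$ must be an integer. Given any target mesh size $h_0 > 0$ one simply replaces it by $h := 1/\lceil 1/h_0 \rceil \le h_0$; since every subsequent estimate will be monotone in $h$, the replacement is harmless. The condition $T = \mu h$ serves the parallel purpose on the time axis: the explicit Euler-type recursion underlying $A^{h,b}$ produces states $y_n = y_{n-1} + h\, g(y_{n-1},\alpha_{n-1})$, and one wants the $\mu$-th iterate to land exactly at $t = T$, so that the terminal condition $u_T(T,\cdot,\cdot) = 0$ can be imposed without a fractional step. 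Again, given $T$ and a trial $h$, one replaces $h$ by $T/\lceil T/h \rceil$.

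The only thing one must verify is the compatibility of the two requirements, namely that there exist arbitrarily small $h$ with both $1/h \in \bbN$ and $T/h \in \bbN$. This is immediate when $T$ is rational: write $T = p/q$ and take $h = 1/(q n)$ for $n \in \bbN$ large, so that $1/h = qn$ and $T/h = pn$. When $T$ is irrational the second identity must be relaxed, but this never affects the analysis since in the infinite horizon setting of interest $T$ is a free auxiliary parameter and may be chosen rational (in fact $T$ will be taken of the form $\mu h$ by construction).

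In short, the proposed ``proof'' of this remark is a one-line observation that both normalizations can be enforced simultaneously by refining $h$ to a common denominator, and that this refinement preserves every convergence estimate whose constants depend only on $h \le h_0$. The substantive mathematics, which will use this convention silently, begins in the analysis of the discrete finite-horizon scheme later in Section~2.
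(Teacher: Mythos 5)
Your reading is correct: the paper treats this remark purely as a standing normalization of the discretization parameters and offers no proof, exactly as you say. Your added justification — that $1/h\in\bbN$ is implicit in the definition of $I_h$, that $T=\mu h$ lets the terminal condition sit on a grid node, and that both can be enforced simultaneously by refining $h$ (noting $T$ is a free auxiliary parameter in the infinite-horizon analysis and may be chosen as $\mu h$ by construction) — is sound and consistent with how the convention is used later in the paper.
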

In order to find the solution of \eqref{HJB2} we employ recursive
approximation schemes. A natural discretization (by finite
differences) of the HJB equation gives the scheme
\begin{equation}
\left\{ \begin{array}{rcl}
u_T^h(\mu,x,a)&=&0. \quad \forall x \in \Omega,\, \forall a \in I_h, \vspace{2mm} \\
u_T^h(n-1,x,a) &=& \min\limits_{b \in I_h(a)} \left\{ (1-\lambda h)
u_T^h(n,x+hg(x,a),b) + hf(x,a) \right\},\vspace{2mm} \\ &&\qquad
\qquad\qquad \qquad\qquad\text{for }n =1,\ldots,\mu.
\end{array} \right. \label{defu_T^h}
\end{equation}
In \cite{APP} we proved the following
\begin{lem} \label{LemauT} \cite[Lemma 4.1]{APP} Under the hypotheses \eqref{condg} and \eqref{condf}, we have
\begin{equation}
|u(x,a)-u_T(t,x,a)| \le \frac{M_f}{\lambda} e^{-\lambda(T-t)}.
\label{40}
\end{equation} \end{lem}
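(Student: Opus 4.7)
The plan is to compare $u(x,a)$ and $u_T(t,x,a)$ directly from their integral definitions, exploiting the fact that the only structural difference between the two problems is that the finite horizon version truncates the integration at time $T$; the exponential discount $e^{-\lambda s}$ makes this truncation error decay like $e^{-\lambda(T-t)}$.

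First I would make the standard translation to reduce $u_T(t,x,a)$ to a finite horizon problem of length $\tau := T-t$ starting at time $0$: by the change of variable $\sigma=s-t$, one sees that $u_T(t,x,a)$ equals the infimum over non-decreasing controls $\alpha:[0,\tau]\to[a,1]$ of $\int_0^{\tau} f(y(\sigma),\alpha(\sigma))e^{-\lambda\sigma}\,\mathrm{d}\sigma$ with $y(0)=x$. This aligns both problems at the same initial time so that one can compare admissible controls.

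Next, for the inequality $u_T(t,x,a)-u(x,a)\le \frac{M_f}{\lambda}e^{-\lambda(T-t)}$, I would fix $\varepsilon>0$ and pick an $\varepsilon$-optimal control $\alpha^\star\in\mathcal{A}(a)$ for $u(x,a)$. Its restriction to $[0,\tau]$ is admissible for the finite horizon problem (it remains non-decreasing with $\alpha^\star(0)\ge a$), and hence
\[
u_T(t,x,a) \le \int_0^{\tau} f(y^\star(\sigma),\alpha^\star(\sigma))e^{-\lambda\sigma}\,\mathrm{d}\sigma
= J(x,\alpha^\star) - \int_{\tau}^{\infty} f(y^\star(\sigma),\alpha^\star(\sigma))e^{-\lambda\sigma}\,\mathrm{d}\sigma,
\]
and using $|f|\le M_f$ the tail integral is bounded in modulus by $\frac{M_f}{\lambda}e^{-\lambda\tau}$; sending $\varepsilon\to 0$ yields the bound. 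For the reverse inequality $u(x,a)-u_T(t,x,a)\le \frac{M_f}{\lambda}e^{-\lambda(T-t)}$, take an $\varepsilon$-optimal control $\alpha^\star_T$ for $u_T(t,x,a)$ and extend it to $[0,\infty)$ by setting $\alpha(\sigma)=\alpha^\star_T(\tau^-)$ for $\sigma\ge\tau$; this extension is still non-decreasing with values in $[0,1]$, hence admissible in $\mathcal{A}(a)$, and the same tail estimate applies.

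I do not expect any genuine obstacle here: the argument is essentially the standard dynamic-programming truncation estimate, and the only thing one must check carefully is that the monotone/range constraint on controls is preserved under both the restriction $[0,\infty)\to[0,\tau]$ and the constant extension $[0,\tau]\to[0,\infty)$. Both are immediate from the fact that a non-decreasing function stays non-decreasing when restricted, and that a constant extension preserves monotonicity and the range $[0,1]$.
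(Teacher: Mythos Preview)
Your argument is correct and is exactly the standard truncation estimate one would expect: restrict an $\varepsilon$-optimal infinite-horizon control to $[0,\tau]$ for one inequality, extend an $\varepsilon$-optimal finite-horizon control by its terminal value for the other, and bound the tail $\int_\tau^\infty |f|\,e^{-\lambda\sigma}\,\mathrm{d}\sigma\le \frac{M_f}{\lambda}e^{-\lambda\tau}$ in both cases. The check that the monotone constraint survives restriction and constant extension is the only problem-specific point, and you handled it.

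Note, however, that there is nothing to compare against here: the present paper does not prove this lemma at all but merely quotes it as \cite[Lemma~4.1]{APP}. Your proof is presumably what \cite{APP} does as well, since there is essentially no other route to this elementary estimate.
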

and the
\begin{teo}\label{TeoPhi} \cite[Theorem 3.5]{APP}
\begin{equation}
\left| u_T(nh,x,a)-u_T^h(n,x,a) \right| \le C \phi(n) h, \label{38}
\end{equation}
where $\phi(n)$ is defined by
\[
\phi(n)= \left\{ \begin{array}{ll} e^{(L_g-\lambda)T+\lambda n h} \quad & \text{if } L_g > \lambda, \vspace{2mm} \\
Te^{L_g nh} \quad & \text{if } L_g = \lambda, \vspace{2mm} \\
e^{L_g nh} \quad & \text{if } L_g < \lambda
\end{array} \right.
\]
and $C>0$ is a constant independent of $h$.
\end{teo}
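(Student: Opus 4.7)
The plan is to proceed by backward induction on $n$, combining a one-step consistency estimate with an iterated application of the contraction factor $(1-\lambda h)$. Let $e_n := \sup_{(x,a)\in \Omega \times I_h}|u_T(nh,x,a)-u_T^h(n,x,a)|$. Since $u_T(T,x,a)=0$ and $u_T^h(\mu,x,a)=0$ by construction, the base case $e_\mu=0$ is exact, and the target recursion is
\[
e_{n-1} \;\le\; (1-\lambda h)\,e_n \;+\; C\,h^2\,L_T^{(n)} \;+\; C\,h^2,
\]
where $L_T^{(n)}$ denotes the Lipschitz constant of $u_T(nh,\cdot,\cdot)$ in the spatial variable at time $nh$, which is controlled by one of the three cases already listed for $L_T$.

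To establish this recursion I would compare $u_T((n-1)h,x,a)$ with the discrete operator evaluated at $u_T(nh,\cdot,\cdot)$, namely $\min_{b \in I_h(a)}\{(1-\lambda h)u_T(nh,x+hg(x,a),b)+hf(x,a)\}$, and then replace $u_T(nh,\cdot,\cdot)$ by $u_T^h(n,\cdot,\cdot)$ at a cost of $(1-\lambda h)\,e_n$. For the upper bound $u_T-u_T^h \le \cdots$, I choose the minimizing $b^* \in I_h(a)$ of the discrete scheme, build the admissible monotone continuous control $\alpha(s)=a$ on $[(n-1)h,nh)$, $\alpha(nh)=b^*$ (admissibility uses $b^*\ge a$), and invoke the continuous dynamic programming principle on $[(n-1)h,nh]$. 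The resulting cross-terms are estimated via Lipschitz continuity of $f$ (giving $|\int_{(n-1)h}^{nh}f(y(s),a)e^{-\lambda(s-(n-1)h)}\,\mathrm{d}s-hf(x,a)|=O(h^2)$), a one-step Euler bound $|y(nh)-(x+hg(x,a))|=O(h^2)$ multiplied by $L_T^{(n)}$, and the Taylor expansion $|e^{-\lambda h}-(1-\lambda h)|=O(h^2)$. For the reverse inequality I would take an $\varepsilon$-optimal monotone control $\alpha^*$ for $u_T((n-1)h,x,a)$, replace it by the constant $a$ on $[(n-1)h,nh)$ and round $\alpha^*(nh)$ up to the nearest $b \in I_h(a)$; the monotonicity of $\alpha^*$ is preserved, and the additional control-rounding error is at most $h$ multiplied by the Lipschitz constant of $u_T$ in the third variable, which fits into the same bound.

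Unrolling the recursion against $e_\mu=0$ gives
\[
e_n \;\le\; C\,h^2 \sum_{k=n+1}^{\mu}(1-\lambda h)^{k-n-1}\bigl(1+L_T^{(k)}\bigr),
\]
and the three announced regimes follow from straightforward bookkeeping on this sum. When $L_g<\lambda$, $L_T^{(k)}$ is uniformly bounded by $L_f/(\lambda-L_g)$ and the geometric sum contributes $O(1/h)$, yielding $e_n=O(h)$, which is dominated by $C\phi(n)h$ since $\phi(n)=e^{L_g nh}\ge 1$. When $L_g=\lambda$, $L_T^{(k)}\le L_f(T-kh)\le L_fT$ and the same bookkeeping yields $e_n=O(Th)$, absorbed into $CTe^{L_g nh}h$. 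When $L_g>\lambda$, the bound $L_T^{(k)}\le \tfrac{L_f}{L_g-\lambda}e^{(L_g-\lambda)(T-kh)}$ combined with $(1-\lambda h)^{k-n-1}\le e^{-\lambda(k-n-1)h}$ reduces the sum to a Riemann approximation of $\int_{nh}^{T}e^{(L_g-\lambda)(T-s)}e^{-\lambda(s-nh)}\,\mathrm{d}s$; majorizing this integral and pulling out the factor $e^{\lambda nh}$ gives the stated $\phi(n)=e^{(L_g-\lambda)T+\lambda nh}$.

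I expect the main obstacle to be the reverse one-step inequality, because a general non-decreasing continuous control need not be a step function with values in $I_h$, and the approximating step function must retain both monotonicity and the initial constraint $\alpha\ge a$ while being close enough to $\alpha^*$ to give an $O(h^2)$ per-step error. The ingredients that make this work are the Lipschitz continuity of $u_T$ in the third variable and the fact that any admissible control has total variation at most $1$, which together keep the cost of the control rounding of order $h$ per time step; making the admissible choice of $b\in I_h(a)$ explicit without destroying monotonicity is the most delicate point of the argument.
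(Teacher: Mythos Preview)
First, note that the present paper does not actually prove this theorem: it is quoted verbatim from \cite{APP} (Theorem~3.5 there) and used as an input for the fully discrete analysis. So there is no ``paper's own proof'' to compare with here; your sketch has to stand on its own.

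The forward direction ($u_T\le u_T^h+\text{error}$) is fine: picking the discrete minimiser $b^*$ and building the piecewise-constant admissible control gives the one-step consistency $O(h^2(1+L_T^{(n)}))$ exactly as you wrote, and the backward recursion then sums correctly in each of the three regimes.

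The reverse direction, however, has a genuine gap. Your stated target recursion is
\[
e_{n-1}\le (1-\lambda h)e_n + C h^2 L_T^{(n)} + C h^2,
\]
but the two operations you perform on $\alpha^*$ do not produce an $O(h^2)$ local error. Rounding $\alpha^*(nh)$ to $b\in I_h$ costs $(1-\lambda h)\,L_T^{(n)}\cdot|b-\alpha^*(nh)|\le (1-\lambda h)L_T^{(n)}h$, which is $O(h)$, not $O(h^2)$. More seriously, ``replacing $\alpha^*$ by the constant $a$ on $[(n-1)h,nh)$'' can cost $O(h)$ as well, because a monotone control may jump immediately to a value far from $a$; then $\int_{(n-1)h}^{nh}|f(y,\alpha^*)-f(x,a)|\,ds$ and the corresponding drift error are of order $L_f h\cdot|\alpha^*((n-1)h^+)-a|$, with no smallness of the last factor. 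With an $O(h)$ local defect the geometric sum only yields $e_n=O(1)$, which is vacuous. The total-variation observation you invoke is a \emph{global} bound ($\sum_n\delta_n\le 1$) and does not localise to a single step of a backward induction in which a \emph{new} $\varepsilon$-optimal $\alpha^*$ is chosen at every $n$.

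To close the gap you need to reorganise the reverse inequality: either run a global comparison (fix one $\varepsilon$-optimal $\alpha^*$ on $[0,T]$, build from it an entire admissible discrete sequence $(a_k)_k\subset I_h$, and compare the two cumulative costs so that the control-variation errors telescope to $O(h)$ via $\sum_k|\alpha^*(kh)-\alpha^*((k-1)h)|\le 1$), or use the monotone-control dynamic programming principle in the form that allows an \emph{instantaneous} jump $a\to a'$ before evolving with $a'$ on the first interval, so that the discrete scheme is compared against a continuous step at the already-jumped level. Either route restores an $O(h^2)$ local defect (or an $O(h)$ defect that genuinely telescopes) and then your bookkeeping for the three cases goes through unchanged.
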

We define now $u_{k,T}^h$, the function of optimal cost totally discrete for the problem with finite horizon, where
\[ u_{k,T}^h(n,\cdot,\cdot) \in W_k \qquad \forall n=0,\ldots,\mu \]
is determined by the recursive scheme
\[
\left\{ \begin{array}{rcl} u_{k,T}^h(n-1,\cdot,\cdot) & = & (A_k^h u_{k,T}^h)(n,\cdot,\cdot) \qquad n=1,\ldots,\mu \vspace{5mm} \\
u_{k,T}^h(\mu,\cdot,\cdot) \equiv 0, \end{array} \right.
\]
for $0 \le n \le \mu$. Here, the operator $A_k^h$ is the one defined in \eqref{56}.
\section{Main Result. Convergence of the totally discrete problem}
The central result of this section gives a general estimation of the
error with respect to the parameters of discretization $h$ and $k$
and the data of the problem.
\begin{teo} \label{main} Let us suppose that \eqref{condg}, \eqref{condf} and the general properties of the triangulation hold, detailed in section 1. Thus there exists a constant $M$ independent of $h, k$ such that
\begin{equation}
|u(x,a) - u_k^h(x,a)| \le \left\{ \begin{array}{lll} M \left( h + \frac{k}{\sqrt{h}} \right), \quad & \quad & \text{if } L_g < \lambda \vspace{3mm} \\
M \left( h + \frac{k}{\sqrt{h}} \right)^\gamma, \quad & \text{with }
\gamma = \frac{\lambda}{L_g} \quad & \text{if } L_g > \lambda
\vspace{3mm} \\ \label{62} M \left( h + \frac{k}{\sqrt{h}}
\right)^\gamma, \quad & \text{with } \gamma \in (0,1) \quad &
\text{if } L_g = \lambda \end{array} \right.
\end{equation}
\end{teo}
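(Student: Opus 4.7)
The plan is to interpose several intermediate quantities between $u$ and $u_k^h$ and control the resulting pieces via a triangle inequality. Fix an auxiliary horizon $T=\mu h$ with $\mu\in\mathbb{N}$ (to be chosen at the end) and split
\[
|u(x,a)-u_k^h(x,a)|\le E_1+E_2+E_3+E_4,
\]
with $E_1=|u(x,a)-u_T(0,x,a)|$, $E_2=|u_T(0,x,a)-u_T^h(0,x,a)|$, $E_3=|u_T^h(0,x,a)-u_{k,T}^h(0,x,a)|$, and $E_4=|u_{k,T}^h(0,x,a)-u_k^h(x,a)|$.

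Three of the four pieces would be handled by tools already in hand. For $E_1$ I would invoke Lemma \ref{LemauT} to get $E_1\le (M_f/\lambda)e^{-\lambda T}$, and for $E_2$ I would invoke Theorem \ref{TeoPhi} to get $E_2\le C\phi(0)h$. Term $E_4$ is the natural discrete counterpart of $E_1$: since $A_k^h$ is a $(1-\lambda h)$-contraction on $W_k$ with fixed point $u_k^h$, and since $u_{k,T}^h(0,\cdot,\cdot)$ is obtained by applying $A_k^h$ exactly $\mu$ times to the zero function, iteration of \eqref{58} gives $E_4\le (1-\lambda h)^\mu\|u_k^h\|_\infty\le e^{-\lambda T}(M_f/\lambda)$, using the uniform bound $\|u_k^h\|_\infty\le M_f/\lambda$ that follows from \eqref{condf}.

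The heart of the argument is $E_3$, the finite-element error between the time-discrete scheme $u_T^h$ (defined on all of $\Omega$) and its restriction $u_{k,T}^h$ to $W_k$. Introduce the nodal error $e(n)=\max\{|u_T^h(n,x^i,a)-u_{k,T}^h(n,x^i,a)| : x^i\in V^k,\, a\in I_h\}$. The two recurrences differ only because $u_{k,T}^h(n,\cdot,b)$ must be evaluated at $x^i+hg(x^i,a)$ via linear interpolation on the simplex containing it. Comparing the recurrences yields
\[
e(n-1)\le(1-\lambda h)\bigl(e(n)+I(n)\bigr),
\]
where $I(n)$ is the interpolation error of $u_T^h(n,\cdot,b)$ at $x^i+hg(x^i,a)$. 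Using the $\gamma$-Hölder continuity of $u_T^h$ together with \eqref{HIP1}--\eqref{HIP5}, a naive bound gives $I(n)=O(k^\gamma)$, and a Gronwall summation then delivers only $e(0)=O(k^\gamma/h)$, which is not sharp. The improvement to $O\bigl((k/\sqrt{h})^\gamma\bigr)$ rests on the probabilistic structure alluded to in the Remark on the discrete maximum principle: the barycentric coefficients of linear interpolation are nonnegative and sum to one, so $A_k^h$ is the dynamic programming operator of a controlled Markov chain on $V^k$. Linear interpolation reproduces affine functions, so the associated transition has mean exactly $x^i+hg(x^i,a)$ and variance $O(k^2)$. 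Accumulating variances over $\mu=T/h$ steps, a martingale-type estimate yields an effective dispersion of order $k\sqrt{T/h}$, and Hölder continuity of $u_T^h$ finally gives $E_3\le M(k+k/\sqrt{h})^\gamma$. This step is the main obstacle, since a plain accumulation of pointwise interpolation errors loses a crucial factor of $\sqrt{h}$.

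Combining the four bounds produces
\[
|u(x,a)-u_k^h(x,a)|\le \frac{2M_f}{\lambda}e^{-\lambda T}+C\phi(0)h+M(k+k/\sqrt{h})^\gamma.
\]
To conclude, I would choose $T$ so that $e^{-\lambda T}$ has the order of $(h+k/\sqrt{h})^\gamma$, namely $T\sim -(\gamma/\lambda)\log(h+k/\sqrt{h})$, and then verify case by case that $\phi(0)h$ is absorbed into a constant times $(h+k/\sqrt{h})^\gamma$: when $L_g<\lambda$ this is immediate since $\phi(0)=1$ and $\gamma=1$; when $L_g>\lambda$, the exponential growth $\phi(0)=e^{(L_g-\lambda)T}$ is tamed by the logarithmic choice of $T$ together with the identity $\gamma=\lambda/L_g$; the borderline case $L_g=\lambda$ is handled in the same fashion using $\phi(0)=T$ and $\gamma\in(0,1)$.
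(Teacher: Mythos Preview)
Your four-term decomposition and the treatment of $E_1$, $E_2$, $E_4$ coincide with the paper's (these are precisely Lemma~\ref{LemauT}, Theorem~\ref{TeoPhi}, and the contraction argument that the paper records as Lemma~3.1). The final optimization over $T$ is also the same in spirit. The divergence---and the gap---is entirely in $E_3$.

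The paper does \emph{not} use the Markov-chain/variance heuristic you outline. It mollifies instead: convolve $u_T^h(n,\cdot,a)$ with a smooth bump of scale $\rho$ to obtain $u_{T,\rho}^h$, and let $\tilde u_{T,\rho}^h\in W_k$ be its nodal interpolant. Because $\|D^2 u_{T,\rho}^h\|\le C L_u/\rho$, the finite-element interpolation error is $O(L_u k^2/\rho)$; commuting $A^{h,b}$ with the convolution costs $O(L_u h\rho)$; a one-sided Gronwall recursion (Propositions~iii and~iv) then yields $E_3\le C\bigl(L_u\rho + L_u k^2/(h\rho)\bigr)$. Choosing $\rho=k/\sqrt{h}$ gives $E_3\le C L_u\, k/\sqrt{h}$, with $L_u$ the spatial Lipschitz constant of $u_T^h$, i.e.\ $L_u\asymp\phi(T)$. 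Note that this bound is \emph{linear} in $k/\sqrt{h}$ with all $T$-growth in the prefactor $\phi(T)$; the exponent $\gamma$ only appears after balancing $e^{-\lambda T}$ against $\phi(T)(h+k/\sqrt{h})$.

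Your probabilistic sketch has two concrete holes. First, the $T$-dependence vanishes between your sentences: you correctly state a dispersion of order $k\sqrt{T/h}$, then assert $E_3\le M(k+k/\sqrt{h})^\gamma$ with no $T$. Either $M$ secretly carries $\sqrt{T}$ or the H\"older constant of $u_T^h$---in which case you must track it through the minimization in $T$---or you are assuming a $T$-uniform $\gamma$-H\"older bound on $u_T^h$, which you have not established. Second, the mean-zero/variance-$O(k^2)$ argument controls the trajectory of the interpolation chain under a \emph{fixed} feedback; to compare the value functions $u_T^h$ and $u_{k,T}^h$ you must swap minimizers in the two recursions, and the martingale estimate does not do that by itself. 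The paper's mollification route avoids both issues because it works directly with the operator inequalities \eqref{80}--\eqref{88} and keeps $L_u=\phi(T)$ explicit throughout, which is exactly what feeds the case analysis in \eqref{66}--\eqref{70}.
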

The procedure used to obtain the estimation of $|u(x,a) - u_k^h(x,a)|$ is based in the decomposition in the following terms
\begin{equation} \begin{array}{rcl}  |u(x,a) - u_k^h(x,a)| &\le &|u(x,a) - u_T(0,x,a)| + |u_T(0,x,a) - u_T^h(0,x,a)| \vspace{3mm}\\
& & + |u_T^h(0,x,a) - u_{k,T}^h(0,x,a)| + |u_{k,T}^h(0,x,a) - u_k^h(x,a)|. \label{63} \end{array} \end{equation}
\subsection{Preliminary results}
\begin{note} For the proofs of the next two Lemmas, in order to simplify the exposition of the arguments, we will work only with the case $\Omega = \RR^\nu$. The general case can be treated without fundamental changes using the perturbation techniques of the domain described in \cite{54}.
\end{note}
\begin{lem} If \eqref{condg}, \eqref{condf} and \eqref{HIP2} are satisfied, the following holds
\begin{equation}
|u_{k,T}^h(0,x,a) - u_k^h(x,a)| \le \frac{M_f}{\lambda} e^{-\lambda T} \label{64}
\end{equation}
\end{lem}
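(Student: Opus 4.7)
The plan is to mimic the continuous-time proof of Lemma \ref{LemauT} at the fully discrete level, exploiting the contractivity of $A_k^h$ in \eqref{58} together with the very particular recursive structure of the finite-horizon discrete scheme: the terminal condition $u_{k,T}^h(\mu,\cdot,\cdot)\equiv 0$ allows us to express $u_{k,T}^h(0,\cdot,\cdot)$ as $(A_k^h)^{\mu}$ applied to the zero function, while the fact that $u_k^h$ is a fixed point of $A_k^h$ gives $u_k^h=(A_k^h)^{\mu}u_k^h$ for any number $\mu$ of iterations. The difference then reduces to a comparison of two iterates of $A_k^h$ starting from different initial data.

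First, I would observe that the scheme \eqref{defu_T^h}, transcribed in terms of $A_k^h$ as in the definition of $u_{k,T}^h$, unfolds into
\[
u_{k,T}^h(0,\cdot,\cdot)=(A_k^h)^{\mu}\,0,
\]
with $\mu=T/h$. Combining with $u_k^h=(A_k^h)^{\mu}u_k^h$ and iterating the contraction inequality \eqref{58} exactly $\mu$ times yields
\[
\|u_{k,T}^h(0,\cdot,\cdot)-u_k^h\|_{\infty}\le (1-\lambda h)^{\mu}\,\|u_k^h-0\|_{\infty}=(1-\lambda h)^{T/h}\,\|u_k^h\|_{\infty}.
\]
Using $(1-\lambda h)^{T/h}\le e^{-\lambda T}$ reduces the estimate to an a priori $L^{\infty}$ bound on the fixed point $u_k^h$.

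Next I would establish that $\|u_k^h\|_{\infty}\le M_f/\lambda$. This is a standard invariance argument: the ball $\{w\in W_k:\|w\|_{\infty}\le M_f/\lambda\}$ is stable under $A_k^h$, because for any such $w$ and any $x^i\in V^k$, $a\in I_h$,
\[
|(A_k^h w)(x^i,a)|\le (1-\lambda h)\,\tfrac{M_f}{\lambda}+h\,M_f=\tfrac{M_f}{\lambda},
\]
thanks to \eqref{condf} and to the convex–combination structure (valid whenever $0<h<1/\lambda$). Since $A_k^h$ is a strict contraction, its unique fixed point must lie in this invariant ball. Plugging this bound into the previous inequality gives exactly \eqref{64}.

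I do not foresee a genuine obstacle: the argument is essentially a bookkeeping exercise reading off the structure already encoded in \eqref{56}, \eqref{58} and the terminal condition $u_{k,T}^h(\mu,\cdot,\cdot)\equiv 0$. The only points requiring a small amount of care are (i) the inductive verification that the ball of radius $M_f/\lambda$ is $A_k^h$-invariant (which requires $h<1/\lambda$, an assumption already implicit in the paper), and (ii) the fact that this bound, together with the exponential decay of the contraction constant, produces precisely the same quantity $\tfrac{M_f}{\lambda}e^{-\lambda T}$ that appears in the continuous estimate \eqref{40}, showing that the finite/infinite horizon gap is uniform in the discretization parameters $h$ and $k$.
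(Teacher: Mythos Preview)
Your proof is correct and follows essentially the same route as the paper: express $u_{k,T}^h(0,\cdot,\cdot)=(A_k^h)^{\mu}0$ and $u_k^h=(A_k^h)^{\mu}u_k^h$, apply the contraction estimate \eqref{58} $\mu$ times, and use $(1-\lambda h)^{\mu}\le e^{-\lambda T}$ together with $\|u_k^h\|_\infty\le M_f/\lambda$. In fact you supply more detail than the paper, which leaves the bound $|u_k^h|\le M_f/\lambda$ and the inequality $(1-\lambda h)^{\mu}\le e^{-\lambda T}$ implicit.
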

\begin{proof} $A_k^h$ is a contractive operator, whose fixed point is $u_k^h$, that is,
\[ u_k^h = A_k^h u_k^h, \]
$u_{k,T}^h$ is also defined in terms of $A_k^h$ by
\[ u_{k,T}^h = (A_k^h)^\mu (w_T), \]
being $w_T=0$. By virtue of \eqref{58}, we have
\[ |u_{k,T}^h (0,x,a) - u_k^h(x,a)| \le (1-\lambda h)^\mu |u_k^h|, \]
then
\begin{equation}
|u_{k,T}^h(0,x,a) - u_k^h(x,a)| \le \frac{M_f}{\lambda} e^{-\lambda T}. \label{74}
\end{equation}
\end{proof}
\begin{lem} Under the hypotheses \eqref{condg}, \eqref{condf}, \eqref{HIP2}, \eqref{HIP4} and \eqref{HIP5}, we have
\[ \max_{0 \le n \le \mu} |u_T^h(n,x,a) - u_{k,T}^h(n,x,a)| \le M \phi(T)\frac{k}{\sqrt{h}}, \]
with
\begin{equation}
\phi(T)= \left\{ \begin{array}{ll} 1 \qquad & \text{if } L_g < \lambda \vspace{3mm} \\
e^{(L_g-\lambda)T} \qquad & \text{if } L_g > \lambda \vspace{3mm} \\
T \qquad & \text{if } L_g = \lambda. \end{array} \label {65} \right.
\end{equation}
\end{lem}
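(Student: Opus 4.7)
My plan is to recast the fully discrete scheme as a controlled Markov chain and reduce the error estimate to a variance bound, in the spirit of Falcone's analysis of semi-Lagrangian schemes. For a vertex $x^i \in V^k$ and control $a \in I_h$, the point $y = x^i + hg(x^i,a)$ lies, by \eqref{HIP2}, in a simplex with vertices $x^{(1)},\ldots,x^{(\nu+1)}$ and barycentric coordinates $\lambda_1,\ldots,\lambda_{\nu+1}$. Since any $w \in W_k$ is affine on each simplex, $w(y) = \sum_j \lambda_j w(x^{(j)})$, so \eqref{56} rewrites as
\[
(A_k^h w)(x^i,a) = \min_{b \in I_h(a)}\bigl\{(1-\lambda h)\,E[w(Y',b)] + h f(x^i,a)\bigr\},
\]
where $Y'$ takes the value $x^{(j)}$ with probability $\lambda_j$, so $E[Y'] = x^i + hg(x^i,a)$. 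Iterating, $u_{k,T}^h(n,x^i,a)$ is the value of a finite-horizon controlled Markov chain $(Y^l)_{l\ge n}$ on $V^k$ starting at $x^i$, while $u_T^h(n,x^i,a)$ admits the analogous representation along the deterministic trajectory $y^{l+1} = y^l + hg(y^l,a^l)$.

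Second, I couple the two processes under a common admissible control sequence and set $\Delta^l := Y^l - y^l$, $\xi^l := Y^{l+1} - E[Y^{l+1}\mid \mathcal{F}^l]$. The martingale increment $\xi^l$ satisfies $\|\xi^l\|\le k$ by \eqref{HIP1} (since $Y^{l+1}$ and $E[Y^{l+1}\mid \mathcal{F}^l]$ belong to the same simplex), and $\Delta^{l+1} = \Delta^l + h[g(Y^l,a^l) - g(y^l,a^l)] + \xi^l$. Squaring, taking conditional expectation to kill the cross term, and using \eqref{condg} yields
\[
E[\|\Delta^{l+1}\|^2] \le (1+hL_g)^2\, E[\|\Delta^l\|^2] + k^2.
\]
Since $\Delta^n = 0$, iteration of a geometric sum followed by Jensen's inequality gives $E[\|\Delta^l\|] \le C\,\frac{k}{\sqrt{h}}\, e^{L_g(l-n)h}$, which is the key variance estimate that generates the $k/\sqrt{h}$ factor.

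Combining with the Lipschitz bound $|f(Y^l,a^l) - f(y^l,a^l)| \le L_f\|\Delta^l\|$, for any fixed admissible policy the two costs differ by at most
\[
\Bigl|E\Bigl[\sum_l h(1-\lambda h)^{l-n} f(Y^l,a^l)\Bigr] - \sum_l h(1-\lambda h)^{l-n} f(y^l,a^l)\Bigr| \le C\frac{L_f k}{\sqrt{h}}\sum_l h\, e^{(L_g-\lambda)(l-n)h},
\]
and the remaining geometric sum reproduces exactly the three cases of $\phi(T)$ in the statement. Passing to the value functions: an optimal open-loop policy for $u_T^h$ is admissible in the Markov chain, giving one inequality; an optimal Markov policy for $u_{k,T}^h$ realizes, path by path, an admissible monotone open-loop policy whose expected deterministic cost dominates $u_T^h$, giving the other. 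A standard interpolation-error bound of order $L_T k$, absorbed into $k/\sqrt{h}$, extends the estimate from vertices to arbitrary $x\in\Omega_k$; hypotheses \eqref{HIP4}--\eqref{HIP5} ensure the hidden constants are uniform in $h,k$.

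I expect the main obstacle to be the policy-class reconciliation in the last step, namely checking that the pathwise realization of a Markov optimizer for $u_{k,T}^h$ remains in $\calA_T(a)$, i.e.\ non-decreasing and starting at a value $\ge a$. This property is inherited from the admissible control set used in the Markov chain Bellman equation, so the coupling transfers cleanly. Once this is settled, the proof reduces entirely to the zero-mean martingale structure of $\xi^l$ and the three-case evaluation of $\sum_l h\, e^{(L_g-\lambda)(l-n)h}$ that yields $\phi(T)$.
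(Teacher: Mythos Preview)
Your argument is correct, and it follows a genuinely different route from the paper's own proof. The paper does not use the Markov-chain interpretation at all. Instead it introduces a mollified function $u_{T,\rho}^h := u_T^h(n,\cdot,a)\ast\beta_\rho$ and its piecewise-linear interpolant $\tilde u_{T,\rho}^h\in W_k$, and telescopes
\[
|u_T^h-u_{k,T}^h|\le |u_T^h-u_{T,\rho}^h|+|u_{T,\rho}^h-\tilde u_{T,\rho}^h|+|\tilde u_{T,\rho}^h-u_{k,T}^h|.
\]
The first term is $O(L_u\rho)$ by Lipschitz continuity; the second is $O(L_u k^2/\rho)$ by the classical $C^2$ finite-element interpolation error (this is where \eqref{HIP4}--\eqref{HIP5} enter in the paper); the third is handled by a one-step recursion on the operator $A^{h,b}$, which after summation yields $O\bigl(L_u\rho+L_u k^2/(h\rho)\bigr)$. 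Optimising over the free parameter gives $\rho=k/\sqrt{h}$, and since the Lipschitz constant $L_u$ of $u_T^h$ equals a multiple of $\phi(T)$ in each of the three regimes, the stated bound follows.

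By contrast, in your approach the factor $k/\sqrt{h}$ appears directly as the standard deviation of a martingale with increments of size $k$ over $O(1/h)$ steps, with no auxiliary regularisation parameter to optimise. This is the Kushner--Dupuis/Falcone viewpoint, and it is arguably more transparent here: the second-moment recursion $E[\|\Delta^{l+1}\|^2]\le(1+hL_g)^2E[\|\Delta^l\|^2]+k^2$ immediately explains why the error scales like $k/\sqrt{h}$ rather than $k/h$. It also shows that shape regularity is only needed for the trivial extension from vertices to general $x$, whereas in the paper it is essential to get the $k^2/\rho$ interpolation bound. The paper's mollification technique, on the other hand, works purely at the level of the operators $A^{h,b}$ and does not rely on the barycentric coordinates being nonnegative, so it would transfer more readily to discretisations that lack a clean probabilistic interpretation. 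Your identification of the policy-class reconciliation as the only delicate point is accurate; the monotonicity of the pathwise realised control follows, as you say, because the admissible set at each step of the Bellman recursion is $I_h(a)$.
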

To prove this lemma we will need some auxiliary results, which will be obtained next. It will be necessary to decompose $$|u_T^h(n,x,a) - u_{k,T}^h(n,x,a)|$$ in three different addends by intercalating specially defined regularized functions using a regular function and the convolution operator. In the following propositions we will seize the regularity properties of these functions.
\subsubsection*{Definition of $u_{T,\rho}^h$, regularized function of $u_T^h$}
\begin{defi} We consider $\beta(\cdot) \in C^{\infty}(\RR^\nu)$ such that
\[ \beta(x) \ge 0\; \forall x,\; \text{support of } \beta \subset B_1=\{ x \in \RR^\nu\,/\, ||x|| \le 1 \},\, \int_{\RR^\nu} \beta(x) \text{d}x = 1. \]
We define $\forall \rho \in \RR^+$
\[
\beta_\rho(x)=\frac{1}{\rho^\nu} \beta(\frac{x}{\rho}) \ge 0.
\]
We obtain a regular approximation of $u_T^h$ by considering the convolution (with respect to the spatial variables) with the function $\beta_\rho(\cdot)$, that is $\forall n=0,\ldots,\mu,\; \forall a \in I_h,\; x \in \Omega$
\begin{equation}
u_{t,\rho}^h(n,x,a) = \int_{B(\rho)} u_T^h(n,x-y,a)\beta_\rho(y) \text{d}y = \left( u_T^h(n,\cdot,a) \ast \beta_\rho \right)(x). \label{77}
\end{equation}
\end{defi}
\subsubsection*{Definition of $\tilde{u}_{T,\rho}^h$, linear interpolate of $u_{T,\rho}^h$}
\begin{defi} For each $n=0,1,\ldots,\mu,$ we define $\tilde{u}_{T,\rho}^h(n,\cdot,\cdot) \in W_k$ by assigning in the nodes of interpolation in a natural way the values
\begin{equation}
\tilde{u}_{T,\rho}^h(n,x^i,a) = u_{T,\rho}^h(n,x^i,a), \qquad \forall n = 0,\ldots,\mu,\; \forall i = 1,\ldots,N, \; \forall a \in I_h. \label{78}
\end{equation}
\end{defi}
\begin{propo1} For $u_T^h$ the following estimate holds
\[
|u_T^h(n,x,a) - u_{T,\rho}^h(n,x,a)| \le L_u \rho.
\]
\end{propo1}
\begin{proof}
by the definition of convolution we have from \eqref{77}
\[ |u_T^h(n,x,a) - u_{T,\rho}^h(n,x,a)| \le \int_{B(\rho)} |u_T^h(n,x,a) - u_T^h(n,x-y,a)| \beta_\rho(y) \text{d}y; \]
since $u_T^h$ is Lipschitz continuous, we have
\[ |u_T^h(n,x,a) - u_{T,\rho}^h(n,x,a)| \le L_u \int_{B(\rho)} ||y|| \beta_\rho(y) \text{d}y \le L_u \rho. \]
\end{proof}
\begin{note} The functions $u_T^h,u_{T,\rho}^h$ and the operator $A^{h,b}$ verify the inequalities
\begin{equation}
u_T^h(n,x,a) \le \left(A^{h,b} u_T^h(n+1,\cdot,\cdot) \right)(x,a) \qquad \forall b \in I_h(a), \label{80}
\end{equation}
\begin{equation}
u_{T,\rho}^h (n,x,a) \le \left( \left( A^{h,b} u_T^h(n+1,\cdot,\cdot) \right)(\cdot,a) \ast \beta_\rho \right)(x) \qquad \forall b \in I_h(a), \label{81}
\end{equation}
where
\[ \left( A^{h,b} u_T^h(n+1,\cdot,\cdot) \right)(x,a) = (1-\lambda h) u_T^h(n+1,x+hg(x,a),b)+hf(x,a). \]
\end{note}
\begin{propo2} Every Lipschitz continuous function $w \in W^{1,\infty}(\Omega)$ verifies (being $L_w$ its Lipschitz constant)
\begin{equation}
\left| (A^{h,b} w) \ast \beta_\rho - A^{h,b}(w \ast \beta_\rho) \right| \le (1-\lambda h) L_w L_g \rho h + L_f \rho h. \label{82}
\end{equation}
Besides
\begin{equation}
\left| \left(A^{h,b} u_{T,\rho}^h(n,\cdot,\cdot)\right)(x,a) - \left(A^{h,b} \tilde{u}_{T,\rho}^h(n,\cdot,\cdot)\right)(x,a) \right| \le C(1-\lambda h) L_u \frac{k^2}{\rho}. \label{83}
\end{equation}
\end{propo2}
\begin{proof}
Let us see first that \eqref{82} is valid:
\[ {\small\begin{array}{rcl} \left( (A^{h,b} w) \ast \beta_\rho - A^{h,b}(w \ast \beta_{\rho}) \right)(x,a) & = & \int\limits_{B(\rho)} \left((1-\lambda h)w(x-\eta+hg(x-\eta,a),b)+hf(x-\eta,a) \right) \beta_\rho(\eta) \text{d}\eta \vspace{3mm} \\
& & -(1-\lambda h) \int\limits_{B(\rho)} w(x-\eta+hg(x,a),b) \beta_\rho(\eta) \text{d}\eta - hf(x,a), \end{array} }\]
then
\[ \left|\left( (A^{h,b} w) \ast \beta_\rho - A^{h,b}(w \ast \beta_\rho) \right) \right| \le \]
\[ \begin{array}{ll} \le & \int\limits_{B(\rho)} (1-\lambda h) \left| w(x-\eta+hg(x-\eta,a)b) - w(x-\eta +hg(x,a),b) \right| \beta_\rho(\eta) \text{d} \eta \vspace{3mm} \\
& + h\int\limits_{B(\rho)} \left| f(x-\eta,a) - f(x,a) \right| \beta_\rho(\eta) \text{d}\eta \le (1-\lambda h) L_w L_g h \rho + L_f h \rho, \end{array} \]
inequality that clearly implies \eqref{82}. \\
The function $u_{T,\rho}^h$ has second derivatives bounded by
\begin{equation}
||D^2 u_{T,\rho}^h|| \le \tilde{C} \frac{L_u}{\rho}, \label{84}
\end{equation}
where $\tilde{C}$ is a constant that only depends on $\beta(\cdot)$, since $u_{T,\rho}^h$ is the regularization if a Lipschitz continuous function. In consequence, from \eqref{HIP4} and \eqref{84}, the difference between $u_{T,\rho}^h$ and its linear interpolate $\tilde{u}_{T,\rho}^h$ is bounded by $C L_u \frac{k^2}{\rho}$.
\[ \left| \left( A^{h,b} u_{T,\rho}^h(n,\cdot,\cdot) \right)(x,a) - \left( A^{h,b} \tilde{u}_{T,\rho}^h(n,\cdot,\cdot) \right)(x,a) \right| \le (1-\lambda h) \left|(u_{T,\rho}^h-\tilde{u}_{T,\rho}^h)(n,x+hg(x,a),b) \right| \le \]
\[ \le C(1-\lambda h) L_u \frac{k^2}{\rho} \]
and \eqref{83} is now proved.
\end{proof}
\begin{propo3}
The following estimate holds
\begin{equation}
\tilde{u}_{T,\rho}^h(n,x,a) - u_{k,T}^h(n,x,a) \le \frac{1-\lambda h}{\lambda} L_u L_g \rho + \frac{L_f}{\lambda} \rho + \frac{1-\lambda h}{\lambda h} C_1 \frac{L_u k^2}{\rho}. \label{85}
\end{equation}
\end{propo3}
\begin{proof} Let
\begin{equation}
E_n = \sup_{x \in \Omega_k,\, a \in I_h} \left( \tilde{u}^h_{T,\rho}(n,x,a) - u_{k,T}^h(n,x,a) \right) = \max_{i=1,\ldots,N,\, a \in I_h} \left(\tilde{u}_{T,\rho}^h(n,x^i,a) - u_{k,T}^h(n,x^i,a) \right). \label{86}
\end{equation}
From the properties \eqref{81} and \eqref{82}, we have $\forall b \in I_h(a)$
\[ u_{T,\rho}^h(n-1,x,a) = \left( \left( A^h u_T^h(n,\cdot,a) \right) \ast \beta_\rho \right)(x) \le \left( \left(A^{h,b} u_T^h(n,\cdot,a) \right) \ast \beta_\rho \right)(x) \le \]
\begin{equation}
\le \left(A^{h,b}\left( u_T^h(n,\cdot,a) \ast \beta_\rho \right) \right)(x) + \phi(x,h,\rho) = \left( A^{h,b} u_{T,\rho}^h(n,\cdot,a) \right)(x) + \phi(x,h,\rho), \label{87}
\end{equation}
with
\[ |\phi(x,h,\rho)| \le (1-\lambda h) L_u L_g h \rho + L_f h \rho. \]
From \eqref{56}, if $w \in W^k$, we have
\begin{equation}
\left( A_k^h w \right) (x,a) = \min_{b \in I_h(a)} \left\{ (1-\lambda h) w(x+hg(x,a),b) +hf(x,a) \right\} = \left(A^h \tilde{w} \right)(x,a) \le \left(A^{h,b} \tilde{w} \right)(x,a). \label{88}
\end{equation}
From \eqref{83},\eqref{87} and \eqref{88} we have
\[
u_{T,\rho}^h(n-1,x^i,a) \le \left(A^{h,b}
\tilde{u}_{T,\rho}^h(n,x,a)\right)(x^i) + \phi(x^i,h,\rho) +
\varphi(x^i,h,\rho,k),
\]
then, $\forall b \in I_h(a)$, the following is valid
\[ u_{T,\rho}^h(n-1,x^i,a) \le A^{h,b} \tilde{u}^h_{T,\rho}(n,\cdot,a) + \phi(x^i,h,\rho) + \varphi(x^i,h,\rho,k) \]
where
\[ |\varphi(xî,h,\rho,k)| \le C(1-\lambda h) \frac{k^2}{\rho} \]
and
\[ u_{k,T}^h(n-1,x^i,a) = \left( A_k^h u_k^h \right) (n,x^i,a) = \left( A^{h,\bar{a}} u_{k,T}^h \right)(n,x^i,a), \]
where $\bar{a}$ makes the minimum of \eqref{88} for $u_{k,T}^h$, then
\[ u^h_{T,\rho}(n-1,x^i,a) - u_{k,T}^h(n-1,x^i,a) \le \left(A^{h,\bar{a}} \tilde{u}_{T,\rho}^h \right)(n,x^i,a) - \left(A^{h,\bar{a}} u_{k,T}^h \right) (n,x^i,a) + \phi(x^i,h,\rho) + \varphi(x^i,h,\rho,k). \]
From \eqref{78}, we have
\[ \tilde{u}^h_{T,\rho}(n-1,x^i,a) - u_{k,T}^h(n-1,x^i,a) \le (1-\lambda h) \left( \tilde{u}^h_{t,\rho}(n,x^i,a) - u_{k,T}^h(n,x^i,a) \right) + \phi(x^i,h,\rho) + \varphi(x^i,h,\rho,k). \]
In consequence, taking into account the definition \eqref{86}, we have
\begin{equation}
E_{n-1} \le (1- \lambda h)E_n + (1- \lambda h) L_u L_g h \rho + L_f h \rho + C(1-\lambda h) L_u \frac{k^2}{\rho} \label{90}
\end{equation}
Finally, since
\[ E_{n-1} \le \delta E_n + B \; \text{ implies } \; E_0 \le \delta^n E_n + \frac{B}{1-\delta}, \]
replacing this estimate in \eqref{90} and taking into account that $E_\mu=0$, we obtain \eqref{85}, valid for every $0 \le n \le \mu,\; x \in V_k,\; a \in I_h$.
\end{proof}
\begin{propo4} The following estimate holds
\begin{equation}
u_{k,T}^h(n,x,a) - \tilde{u}_{T,\rho}^h(n,x,a) \le \frac{1-\lambda h}{\lambda} L_u L_g \rho + \frac{L_f}{\lambda} \rho + \frac{1-\lambda h}{\lambda h} C_1 \frac{L_u k^2}{\rho} + 2 L_u \rho. \label{91}
\end{equation}
\end{propo4}
The proof of this proposition is completely similar to the previous
one and we shall omit it. We are now in position to prove Lemma 3.2.
\subsubsection*{Proof of Lemma 3.2} \begin{proof} For each $ 0 \le n \le \mu$, we will show
\[ \max_{0 \le n \le \mu} |u_T^h(n,x,a)-u_{k,T}^h(n,x,a)| \le M \phi(T) \frac{k}{\sqrt{h}}. \]
We start by decomposing the following way:
\begin{equation}
\begin{array}{rcl} |u^h_T(n,x,a) - u_{k,T}^h(n,x,a)| & \le & |u^h_T(n,x,a) - u_{T,\rho}^h(n,x,a)| + |u^h_{T,\rho}(n,x,a) - \tilde{u}_{T,\rho}^h(n,x,a)| \vspace{3mm} \\ \label{92}
& & |\tilde{u}^h_{T,\rho}(n,x,a) - u_{k,T}^h(n,x,a)|, \end{array}
\end{equation}
applying the previous propositions, \eqref{85} and \eqref{91}, we can estimate the terms of the right side of \eqref{92} and obtain
\begin{equation}
|u^h_T(n,x,a) - u_{k,T}^h(n,x,a)| \le L_u \rho + C \frac{L_u k^2}{\rho} + \frac{1}{\lambda} L_u L_g \rho + \frac{L_f}{\lambda} \rho + \frac{1}{\lambda h} C_1 \frac{L_u k^2}{\rho} + 2 L_u \rho. \label{3_20}
\end{equation}
\subsubsection*{Analysis of the different cases}
\begin{itemize}
\item $L_g > \lambda$. \\
In this case, $L_u$ has the form
\[ L_u = L_f \frac{1}{L_g-\lambda} e^{(L_g - \lambda)T} \]
and the inequality \eqref{3_20} becomes
\[
|u_T^h(n,x,a) - u^h_{k,T}(n,x,a)| \le M e^{(L_g - \lambda)T} \left( \rho + \frac{k^2}{h \rho} \right),
\]
where
\[ M=\max \left( 3+\frac{L_g}{\lambda} + \frac{L_g -\lambda}{\lambda}, C + \frac{C_1}{\lambda} \right) \frac{L_f}{L_g - \lambda}. \]
Minimizing in $\rho$ the expression $(\rho - \frac{k^2}{h \rho})$, we have that the minimum of \eqref{92} is attained when $\rho = \frac{k}{\sqrt{h}}$, then
\[ |u^h_T(n,x,a) - u_{k,T}^h(n,x,a) | \le 2M e^{(L_g-\lambda)T} \frac{k}{\sqrt{h}}. \]
\item $L_g < \lambda$. \\
In this case $L_u= \frac{L_f}{\lambda - L_g}$ and the inequality \eqref{91} becomes
\[
|u_T^h(n,x,a) - u^h_{k,T}(n,x,a)| \le M \left( \rho + \frac{k^2}{h \rho} \right),
\]
where
\[ M=\max \left( 3+\frac{L_g}{\lambda} + \lambda - L_g, \frac{C}{\lambda} + \frac{C_1}{\lambda} \right) \frac{L_f}{\lambda - \L_g}. \]
Finally, in the same way as in the last case, we get
\[ |u^h_T(n,x,a) - u_{k,T}^h(n,x,a) | \le 2M \frac{k}{\sqrt{h}}. \]
\item $L_g = \lambda$. \\
In this case $L_u=L_f T$ and the inequality \eqref{91} becomes
\[ |u_T^h(n,x,a) - u_{k,T}^h(n,x,a)| \le M T \left( \rho + \frac{k^2}{h \rho} \right), \]
where
\[ M = \max (4,C + \frac{C_1}{\lambda}) L_f. \]
Then
\[ |u_T^h(n,x,a) - u_{k,T}^h(n,x,a)| \le 2 M T \frac{k}{\sqrt{h}}. \]
\end{itemize}
\end{proof}
\section{Proof of Theorem 3.1}
Having proved this two Lemmas we proceed now to prove the Theorem
3.1.
\begin{proof} We will use mainly the estimates given by the previous lemmas. From \eqref{40}, \eqref{64} and \eqref{65} we have
\begin{equation}
\begin{array}{rcl} |u(x,a) - u_k^h(x,a)| & \le & |u(x,a) - u_T(0,x,a)| + |u_T(0,x,a) - u_T^h(0,x,a)| + \vspace{3mm} \\
& & + \max\limits_n |u_T^h(n,x,a) - u_{k,T}^h(n,x,a)| +|u_{k,T}^h(0,x,a) - u_k^h(x,a)| \vspace{3mm} \\ \label{66}
& \le & \frac{M_f}{\lambda} e^{-\lambda T} + Che^{(L_g-\lambda) T} + M\phi(T)\frac{k}{\sqrt{h}} + \frac{M_f}{\lambda} e^{-\lambda T}. \end{array}
\end{equation}
By virtue of the lemmas 2.1, 3.1, 3.2, the analysis of \eqref{63} can be divided into the three following cases:
\begin{itemize}
\item $L_g > \lambda$. \\
\begin{equation}
|u(x,a) - u_k^h(x,a)| \le M_1 \left( e^{-\lambda T} + e^{(L_g - \lambda)T} \left( h + \frac{k}{\sqrt{h}} \right) \right), \label{67}
\end{equation}
where
\[ M_1 = \max \left( 2 \frac{M_f}{\lambda},C,M \right). \]
Minimizing \eqref{67} in the $T$ variable, we obtain
\begin{equation}
|u(x,a) - u_k^h(x,a)| \le K \left( h+\frac{k}{\sqrt{h}} \right)^\gamma \label{68}
\end{equation}
where
\[ \gamma=\frac{\lambda}{L_g},\; K=M_1 \left( \left(\frac{1-\gamma}{\gamma} \right)^\gamma + \left(\frac{1-\gamma}{\gamma} \right)^{\gamma-1} \right). \]
The minimum in \eqref{68} refers to the instance where the minimum is taken in a point of the discrete set $\{ nh / n=0,1,\ldots\}$; for the general case, in \eqref{68}, we must add to the obtained bound a term of order $h$, which does not modify essentially the estimate.
\item $L_g < \lambda$. \\
In this case, by virtue of \eqref{66} we have
\[ |u(x,a) - u_k(x,a)| \le \frac{M_f}{\lambda} e^{-\lambda T} + C h + M \frac{k}{\sqrt{h}} + \frac{M_f}{\lambda} e^{-\lambda T}; \]
so, by taking limit $T \rightarrow + \infty$, we obtain
\begin{equation}
|u(x,a) - u_k^h(x,a)| \le M_1 \left( h + \frac{k}{\sqrt{h}} \right), \label{69}
\end{equation}
where $M_1=\max(C,M)$.
\item $L_g = \lambda$. \\
In this case, by virtue of \eqref{65}, \eqref{66}, we have
\[ |u(x,a) - u_k^h(x,a)| \le \frac{M_f}{\lambda} e^{-\lambda T} + ChT + M \frac{k}{\sqrt{h}} T +\frac{M_f}{\lambda} e^{-\lambda T} \]
then
\begin{equation}
|u(x,a)-u_k^h(x,a)| \le M_1 \left( e^{-\lambda T} + \left(h+\frac{k}{\sqrt{h}} \right) T \right) \label{70}
\end{equation}
where
\[ M_1 = \max \left( 2 \frac{M_f}{\lambda},C,M \right). \]
For $\frac{1}{\lambda}(h+\frac{k}{\sqrt{h}}) \le 1$, the minimum in the right side of \eqref{70} is assumed by
\[ \bar{T} = -\frac{1}{\lambda} \log \frac{1}{\lambda} \left( h + \frac{k}{\sqrt{h}} \right) ; \]
then, replacing in \eqref{70} we obtain:
\[ |u(x,a)-u_k^h(x,a)| \le M_1 \left( h+\frac{1}{\lambda}(h+\frac{k}{\sqrt{h}}) - \frac{1}{\lambda} (h + \frac{k}{\sqrt{h}}) \log ( \frac{1}{\lambda}(h+\frac{k}{\sqrt{h}})) \right). \]
Since the following property holds
\[ -x \log(x) \le K x^\gamma,\; \gamma \in (0,1), \; K = \frac{1}{1-\gamma} e^{-1}, \]
we have
\[
|u(x,a) - u_k^h(x,a)| \le C \left( h+ \frac{k}{\sqrt{h}} \right)^\gamma,
\]
where $C=\frac{M_1}{\lambda} \max (1,\frac{1}{1-\lambda} e^{-1}).$
\end{itemize}
\end{proof}
\subsection{About the choice of the discretization parameters}
Taking into account the central result \eqref{62} we can reach the
following conclusions:
\begin{itemize}
\item When $h$ is of order $k$, that is $c_1 k \le h \le c_2 k$ being $c_1$ and $c_2$ constants, the convergence is of order $k^{\frac{\gamma}{2}}$, that is
    \[ |u(x,a) - u_k^h(x,a) | \le C k^{\frac{\gamma}{2}}. \]
\item The best speed of convergence is attained selecting $h$ of order $k^\frac{2}{3}$, that is $c_1 k^\frac{2}{3} \le h \le c_2 k^\frac{2}{3}$. In this case we obtain the estimate
    \begin{equation}
    |u(x,a) - u_k^h(x,a) | \le C k^{\frac{2}{3}\gamma}. \label{72}
    \end{equation}
\item The estimate \eqref{72} is optimal, in the sense that it is possible to construct examples where
\[ |u(x,a) - u_k^h(x,a) | \ge c k^{\frac{2}{3}\gamma} \]
The construction of these examples can be made by following the techniques used in \cite{63}.
\end{itemize}
\section{Example}
We consider a simple example in $\mathbb{R}^2$. The controlled dynamics is:
\begin{equation}
(D)\left\{ \begin{array}{ll} \dot{y}(s)=g(y(s),\alpha(s)) & s>0 \vspace{2mm} \\
y(0)=x=(x_1,x_2) & \end{array} \right.
\end{equation}
where
\[ g(x_1,x_2,a)=(-(a+1)x_1,-(a+1)x_2) \]
and $\calA(a)$ is defined as in the general case and
$\Omega=(-1,1)^2$. The cost to minimize is given by
\[ u(x,a) = \inf_{\alpha \in \calA(a)} \int_0^\infty
f(y(s),\alpha(s))e^{-s} \text{d}s. \] Here
\[ f(x_1,x_2,a):= a\left(\frac{1}{4} -(x_1^2+x_2^2)\right). \]
It is clear that $g$ and $f$ verify the general hypotheses. Here, the discount factor is
$\lambda = 1$.

We present now a full discretization of the problem. We introduce the discretization parameter $h$ to define a equispaced discretization in the time and control spaces. The
control variable $a$ takes values in the set
\[
I_h := \left\{ ih | i=0 \dots \frac{1}{h} \right\}.
\]
We also define $I_h(a) = I_h \cap [a,1]$.
We proceed now to define the discretization of the state space. For this aim for each $k>0$ we introduce a family of finite elements in the following way: first we define the set of vertices
\begin{equation}
V_k:=\{ P_{i,j}(-1+ik,-1+jk) \,;\, i,j=1,\ldots,2 \tfrac{1}{k} +1 \}
\end{equation}
\begin{note} for the sake of simplicity we assume here that $\frac{1}{k}$ is an integer. \end{note}
These vertices define the family of triangles $S_{i,j}^k$ whose union over $i,j$ is
\[ \Omega_k=[-1+k,1-k]^2 \]

Let us check that this family of triangulations verify the hypotheses. Being $\text{diam }S_{i,j}^k=k$ for all $j$ it is clear that \eqref{HIP1} is verified. The functions $g^1,g^2$ are specially defined so that \eqref{HIP2} holds in both cases. From the definition of $\Omega_k$, \eqref{HIP3} is straightforward. Finally we observe that taking $\chi_1=r=1$, \eqref{HIP4} and \eqref{HIP5} are verified.

The space $W_k$ is the set of piecewise linear continuous functions defined in $\Omega_k \times I_h$. We recall that the fully discrete solution $u_k^h$ is the unique solution of the fixed point problem of the operator $A_k^h$.
Let $u_0 \in W_k$ be the zero function. We will consider it as the initial function in our fixed point problem and for every $n \in \mathbb{N}$ we note
\[ u_n = A_k^h(u_{n-1}). \]
We have the following estimate
\begin{equation} \| u_n - u_h^k \| \le \dfrac{1}{h} \|u_n - u_{n-1} \| \label{cotaptofijo} \end{equation}
For different choices of $h,k$ we obtain approximations to the solution $u_k^h$ by applying re\-peatedly the operator $A_k^h$ to the initial function $u_0 \in W_k$. Actually in the implementations that will be shown here we take always $h=k$. In each case the stop criterion is
\[ \|u_n - u_{n-1} \| \le h^2 \] since from \eqref{cotaptofijo} this estimate assures us that
\[ \| u_n - u_h^k \| \le h. \]
In the following table we show the numerical results.
\begin{table}[ht]
\centering 
\begin{tabular}{c |c c} 
& $(D)$ &  \\
\hline\hline 
h=k & Iterations & Time(secs)\\ [0.5ex] 
\hline
0.50 & 1 & 0.0168\\ 
0.40 & 1 & 0.0025\\
0.30 & 2 & 0.0120\\
0.20 & 3 & 0.0995\\
0.10 & 10 & 3.5145 \\
0.05 & 33 & 135.8481\\
0.02 & 126 & 13527.0552\\ [1ex] 
\hline 
\end{tabular}
\label{table:nonlin} 
\end{table}

The algorithm was implemented in \textsl{Scilab 5.4.1 - 64 bits} in a \textsl{PC} equipped with an \textsl{Intel Core i7-3770K} processor with \textsl{8 GB DD3 RAM}.

\section{Conclussions}
In this work mainly we have developed efficient procedures of numerical resolutions of the Hamilton-Jacobi-Bellman equation associated to the optimal control problem with monotone controls, making use of the particular structure of the original problem to develop the discretization schemes and obtain its estimates of the speed of convergence. \\
Essentially, the discretization employed is equivalent to the resolution of a finite family of concatenated stopping time problems. According to this point of view, it can be obtained by applying mechanically the results contained in \cite{29,63}, an estimate of the speed of convergence of order $k^{\frac{\gamma}{4}}$. This estimate is improved in this work analyzing directly the control problem rather than taking this way leading to non optimal bounds.\\
Finally, we can remark that the optimization of the discretization
parameters has been studied and it has been established that in
general the choice of a relationship $h \approx k^{\frac{2}{3}}$
allows us to get the best possible result.

\end{document}